
\documentclass[a4paper]{amsart}
\usepackage{amsmath}
\usepackage{amssymb}
\usepackage{amscd}
\usepackage{braket}

\newtheorem{thm}{Theorem}[section]

\theoremstyle{definition}
\newtheorem{algorithm}[thm]{Algorithm}

\newtheorem{exmp}[thm]{Example}

\theoremstyle{remark}
\newtheorem{rem}[thm]{Remark}

\newcommand\ZZ{{\mathbb Z}}


\newcommand\Ob{\operatorname{ob}}
\newcommand\Mor{\operatorname{mor}}

\newcommand\id{\operatorname{id}}

\newcommand\Der{\operatorname{Der}}
\newcommand\Ider{\operatorname{Ider}}

\newcommand\C{\mathcal{C}}

\newcommand\F{\mathcal{F}}

\newcommand\Mod{\operatorname{Mod}}



\newcommand\Hlgy{\operatorname{H}}
\newcommand\HBW{\Hlgy_{BW}}

%

\newcommand{\figVII}{
\begin{picture}(160,110)
 \thicklines
 \qbezier[30](32,60)(90,-35)(148,60)
 \qbezier[30](24,50)(90,-45)(156,50)
 \thinlines
 \put(60,100){\makebox(0,0){\normalsize$\bullet$}}
 \put(60,110){\makebox(0,0)[t]{\normalsize$n$}}
 \put(30,60){\makebox(0,0){\normalsize$\bullet$}}
 \put(16,60){\makebox(0,0){\normalsize$n-1$}}
 \put(120,100){\makebox(0,0){\normalsize$\bullet$}}
 \put(120,110){\makebox(0,0)[t]{\normalsize$1$}}
 \put(150,60){\makebox(0,0){\normalsize$\bullet$}}
 \put(155,60){\makebox(0,0){\normalsize$2$}}
 \put(152,69){\vector(-3,4){22}}
 \put(150,90){\makebox(0,0){\normalsize$\alpha_{1}$}}
 \put(112,105){\vector(-1,0){45}}
 \put(90,115){\makebox(0,0)[t]{\normalsize$\alpha_{n}$}}
 \put(52,101){\vector(-3,-4){22}}
 \put(25,90){\makebox(0,0){\normalsize$\alpha_{n-1}$}}
 \put(119,91){\vector(3,-4){22}}
 \put(120,70){\makebox(0,0){\normalsize$\beta_{1}$}}
 \put(68,95){\vector(1,0){45}}
 \put(90,90){\makebox(0,0)[t]{\normalsize$\beta_{n}$}}
 \put(40,65){\vector(3,4){22}}
 \put(65,75){\makebox(0,0){\normalsize$\beta_{n-1}$}}
\end{picture}
}

\newcommand{\figVI}{
\begin{picture}(160,110)
 \thicklines
 \qbezier[30](27,60)(90,-40)(153,60)
 \thinlines
 \put(60,100){\makebox(0,0){\normalsize$\bullet$}}
 \put(60,110){\makebox(0,0)[t]{\normalsize$n$}}
 \put(30,60){\makebox(0,0){\normalsize$\bullet$}}
 \put(16,60){\makebox(0,0){\normalsize$n-1$}}
 \put(120,100){\makebox(0,0){\normalsize$\bullet$}}
 \put(120,110){\makebox(0,0)[t]{\normalsize$1$}}
 \put(150,60){\makebox(0,0){\normalsize$\bullet$}}
 \put(155,60){\makebox(0,0){\normalsize$2$}}
 \put(150,60){\vector(-3,4){28}}
 \put(145,85){\makebox(0,0){\normalsize$\alpha_{1}$}}
 \put(120,100){\vector(-1,0){58}}
 \put(90,110){\makebox(0,0)[t]{\normalsize$\alpha_{n}$}}
 \put(60,100){\vector(-3,-4){28}}
 \put(30,85){\makebox(0,0){\normalsize$\alpha_{n-1}$}}
\end{picture}
}

\newcommand{\figIV}{
\begin{picture}(120,160)(-10,0)
 \put(80,140){\makebox(0,0){\normalsize$\bullet$}}
 \put(85,140){\makebox(0,0)[l]{\normalsize$y_1$}}
 \put(80,110){\makebox(0,0){\normalsize$\bullet$}}
 \put(85,110){\makebox(0,0)[l]{\normalsize$y_2$}}
 \put(80,80){\makebox(0,0){\normalsize$\bullet$}}
 \put(85,80){\makebox(0,0)[l]{\normalsize$y_3$}}
 \put(80,20){\makebox(0,0){\normalsize$\bullet$}}
 \put(85,20){\makebox(0,0)[l]{\normalsize$y_{n}$}}
 \put(80,140){\vector(-1,0){58}}
 \put(35,138){\makebox(0,0)[t]{\normalsize$\alpha_1$}}
 \put(80,110){\vector(-1,0){58}}
 \put(35,108){\makebox(0,0)[t]{\normalsize$\alpha_2$}}
 \put(80,80){\vector(-1,0){58}}
 \put(35,78){\makebox(0,0)[t]{\normalsize$\alpha_3$}}
 \put(80,20){\vector(-1,0){58}}
 \put(35,18){\makebox(0,0)[t]{\normalsize$\alpha_{n}$}}
%
%
%
%
 \put(20,140){\makebox(0,0){\normalsize$\bullet$}}
 \put(15,140){\makebox(0,0)[r]{\normalsize$x_1$}}
 \put(20,110){\makebox(0,0){\normalsize$\bullet$}}
 \put(15,110){\makebox(0,0)[r]{\normalsize$x_2$}}
 \put(20,80){\makebox(0,0){\normalsize$\bullet$}}
 \put(15,80){\makebox(0,0)[r]{\normalsize$x_3$}}
 \put(20,20){\makebox(0,0){\normalsize$\bullet$}}
 \put(15,20){\makebox(0,0)[r]{\normalsize$x_{n}$}}
 \put(80,140){\vector(-2,-1){58}}
 \put(65,133){\makebox(0,0)[tl]{\normalsize$\beta_2$}}
 \put(80,110){\vector(-2,-1){58}}
 \put(65,103){\makebox(0,0)[tl]{\normalsize$\beta_3$}}
 \put(80,80){\line(-2,-1){28}}
 \put(65,73){\makebox(0,0)[tl]{\normalsize$\beta_4$}}
 \put(80,20){\line(-2,-1){28}}
 \put(65,13){\makebox(0,0)[tl]{\normalsize$\beta_1$}}
 \put(40,30){\vector(-2,-1){18}}
 \put(40,150){\vector(-2,-1){18}}
 \thicklines
\qbezier[20](40,150)(80,170)(50,90)
\qbezier[20](52,6)(20,-10)(50,90)
 \thinlines
 \put(80,50){\makebox(0,0){\normalsize$\vdots$}}
 \put(20,50){\makebox(0,0){\normalsize$\vdots$}}
 \put(50,50){\makebox(0,0){\normalsize$\vdots$}}
\end{picture}
}

\newcommand{\figIII}{
\begin{picture}(110,130)
 \put(80,110){\makebox(0,0){\normalsize$\bullet$}}
 \put(85,110){\makebox(0,0)[l]{\normalsize$1$}}

 \put(80,80){\makebox(0,0){\normalsize$\bullet$}}
 \put(85,80){\makebox(0,0)[l]{\normalsize$2$}}
 \put(80,50){\makebox(0,0){\normalsize$\vdots$}}
 \put(80,20){\makebox(0,0){\normalsize$\bullet$}}
 \put(85,20){\makebox(0,0)[l]{\normalsize${n}$}}
 \put(20,80){\makebox(0,0){\normalsize$\bullet$}}
 \put(15,80){\makebox(0,0)[r]{\normalsize$x$}}
 \put(80,110){\vector(-2,-1){58}}
 \put(50,95){\makebox(0,0)[br]{\normalsize$\alpha_0$}}
 \put(80,80){\vector(-1,0){58}}
 \put(50,78){\makebox(0,0)[tr]{\normalsize$\alpha_1$}}
 \put(80,20){\vector(-1,1){58}}
 \put(50,50){\makebox(0,0)[tr]{\normalsize$\alpha_{n}$}}
\end{picture}
}

\newcommand{\figI}{
\begin{picture}(180,40)
 \put(10,20){\makebox(0,0){\normalsize$\bullet$}}
 \put(10,15){\makebox(0,0)[t]{\normalsize$1$}}
 \put(50,20){\vector(-1,0){38}}
 \put(30,25){\makebox(0,0)[b]{\normalsize$\alpha_{1}$}}
 \put(50,20){\makebox(0,0){\normalsize$\bullet$}}
 \put(50,15){\makebox(0,0)[t]{\normalsize${2}$}}
 \put(70,20){\makebox(0,0){\normalsize$\cdots$}}
 \put(90,20){\makebox(0,0){\normalsize$\bullet$}}
 \put(90,15){\makebox(0,0)[t]{\normalsize$n-2$}}
 \put(130,20){\vector(-1,0){38}}
 \put(110,25){\makebox(0,0)[b]{\normalsize$\alpha_{n-2}$}}
 \put(130,20){\makebox(0,0){\normalsize$\bullet$}}
 \put(130,15){\makebox(0,0)[t]{\normalsize$n-1$}}
 \put(170,20){\vector(-1,0){38}}
 \put(150,25){\makebox(0,0)[b]{\normalsize$\alpha_{n-1}$}}
 \put(170,20){\makebox(0,0){\normalsize$\bullet$}}
 \put(170,15){\makebox(0,0)[t]{\normalsize$n$}}
\end{picture}
}


\begin{document}
%

\title[On computation of the first Baues--Wirsching cohomology]{On computation of the first Baues--Wirsching cohomology of a freely-generated small category}
\author[Y. Momose]{MOMOSE, Yasuhiro}
\address[Y. Momose]{Department of Mathematical Sciences, Shinshu University, 3-1-1 Asahi, Matsumoto, Nagano 390-8621 JAPAN}
\email{momose@math.shinshu-u.ac.jp}

\author[Y. Numata]{NUMATA, Yasuhide}
\address[Y. Numata]{Department of Mathematical Sciences, Shinshu University, 3-1-1 Asahi, Matsumoto, Nagano 390-8621 JAPAN}
\email{nu@math.shinshu-u.ac.jp}

\begin{abstract}
The Baues--Wirsching cohomology is one of the cohomologies of a small category.
Our aim is to describe the first Baues--Wirsching cohomology of the small category generated by a finite quiver freely.
We consider the case where the coefficient is a natural system obtained by the composition of a functor and the target functor.
We give an algorithm to obtain generators of the vector space of inner derivations.
It is known that there exists a surjection from the vector space of derivations of the small category to the first Baues--Wirsching cohomology whose kernel is the vector space of inner derivations.
\end{abstract}

%

\keywords{Finite quivers; path algebras; category algebras; inner derivations; Gaussian elimination.}

\maketitle

\section{Introduction}
\label{sec:Intro}

Baues and Wirsching \cite{MR814176} introduced a cohomology of a small category, which is called nowadays the Baues--Wirsching cohomology.
It is known that the Baues--Wirsching cohomology is a generalization of some cohomologies; e.g., the cohomology of a group $G$ with coefficients in a left $G$-module, the singular cohomology of the classifying space of a small category with coefficients in a field, and so on. 
Let $k$ be a field and $D$ a natural system on a small category $\C$; that is, a functor from the category of factorizations in $\C$ to the category $k$-$\Mod$ of left $k$-modules.
The $n$-th Baues--Wirsching cohomology of $\C$ with coefficients in $D$ is denoted by $\HBW^n(\C,D)$.
For an equivalence $\phi : \C \to \C'$ of small categories and a natural system $D$ on $\C$, Baues and Wirsching showed that the $k$-liner map $\tilde{\phi} : \HBW^n(\C,D) \to \HBW^n(\C',\phi^\ast D)$ induced by $\phi$ is an isomorphism for $n \in \ZZ$. 
The Baues--Wirsching cohomology is an invariant for the equivalence of small categories in this sense.

Assume that $\C$ is freely generated by a quiver and that $D = \check{D} \circ t$ is the composition of $\check{D}$ and the target functor $t$.
In this case, it is known that $\HBW^n(\C,D)$ vanishes for $n \ge 2$ and that $\HBW^0(\C,D)$ is isomorphic to the limit $\lim_{\C}\check{D}$.
Therefore, we focus on the first cohomology $\HBW^1(\C,D)$.
%
Let $k\C$ be the category algebra of $\C$, i.e. the algebra whose basis is a morphism of $\C$ and whose multiplication is the composition of morphisms (if the morphisms are not composable, then the multiplication is zero).
Since $\C$ is generated by $Q$, the category algebra is the path algebra $kQ$.
Define the functor $\pi_{\C}$ from $k\C$-$\Mod$ to the category $k$-$\Mod^{\C}$ of functors from $\C$ to $k$-$\Mod$ as follows:
$\pi_{\C}$ maps an object $M$ in $k\C$-$\Mod$ to the functor which maps $x \in \Ob(\C)$ to $\id_x \cdot M$ and which maps $u \in \Mor(\C)$ to the left multiplicative map of $u$; and 
$\pi_{\C}$ maps a morphism $f$ in $k\C$-$\Mod$ to the natural transformation $\{ f|_{\id_x \cdot M} \}_{x \in \Ob(\C)}$.
Since the set of objects in $\C$ is finite, $\pi_{\C}$ is an equivalence of categories. (See \cite{MR0294454}.)
Our algorithm introduced in this article computes the first cohomology $\HBW^1(\C,\pi_{\C}(N) \circ t)$ for a left $k\C$-module $N$.

The authors give a description of the first Baues--Wirsching cohomology in the case where $\C$ is a $B_2$-free poset \cite{Momose--Numata}.
The algorithm in this paper is a generalization of the idea of the special case.

This article is organized as follows: 
In Section \ref{subsec:1st BW}, we define some notation.
In Section \ref{subsec:algorithm}, we give algorithms.
In Section \ref{sec:Main results}, we show our main result.
We calculate the first Baues--Wirsching cohomology 
for some examples in Section \ref{sec:example}.

\section{Definition}
\label{sec:Def}

\subsection{Definition of the first Baues--Wirsching cohomology}
\label{subsec:1st BW}

We define some notation on the first Baues--Wirsching cohomology in this section.

Let $P$ and $Q$ be finite sets, $s$ and $t$ maps from $Q$ to $P$.
We call the set $Q$ equipped with the triple $(P;s,t)$ a \emph{finite quiver}.
We call an element of $P$ a \emph{vertex} and call an element of $Q$ an \emph{arrow}.
An arrow $f \in Q$ such that $s(f) = a$ and $t(f) = b$ is denoted by $f : a \to b$.
We call a sequence $f_{1} \cdots f_{l}$ of arrows a \emph{path of length $l$} if $s(f_{i}) = t(f_{i+1})$ for all $i$.
A path $f_{1} \cdots f_{l}$ such that $t(f_1) = s(f_{l})$ is called a \emph{cycle}.
We say that a quiver $Q$ is \emph{acyclic} if $Q$ has no cycle.
Let $Q'$ be a subset of $Q$ and $P'$ a subset of $P$.
We call the set $Q'$ equipped with the triple $(P';s|_{Q'},t|_{Q'})$ a \emph{subquiver} of $Q$ if $s(Q')$ and $t(Q')$ are subsets of $P'$.

Let $Q$ be a finite quiver.
The category defined in the following manner is called the \emph{small category freely generated by $Q$}:
\begin{itemize}
\item the set of objects is the set of vertices of $Q$; 
\item a morphism from $x$ to $y$ is a path from $x$ to $y$; 
\item the identity $\id_x$ is the path from $x$ to $x$ of length $0$; and 
\item if $s(f) = t(g)$, then the composition of morphisms $f$ and $g$ is the concatenation of paths $f$ and $g$.
\end{itemize}

Let $\C$ be a small category freely generated by $Q$.
The category $\F(\C)$ defined in the following manner is called the \emph{category of factorizations in $\C$}:
\begin{itemize}
\item the objects are morphisms in $\C$; 
\item a morphism from $\alpha$ to $\beta$ is a pair $(u,v)$ of morphisms in $\C$ such that $\beta = u \circ \alpha \circ v$; and 
\item the composition of $(u',v')$ and $(u,v)$ is defined by $(u',v') \circ (u,v) = (u' \circ u , v \circ v')$.
\end{itemize}
A covariant functor from $\F(\C)$ to $k$-$\Mod$ is called a \emph{natural system} on a small category $\C$.
Let $D$ be a natural system on the small category $\C$.
For $\alpha \in \Ob(\F(\C))$, $D_{\alpha}$ denotes the $k$-module corresponding to $\alpha$.
For a pair $(u,v)$ of composable morphisms, we define $u_{\ast}$ and $v^{\ast}$ by 
\begin{align*}
u_{\ast}=D(u , \id_{s(v)}) : D_{v} \to D_{u \circ v},\\ v^{\ast}=D(\id_{t(u)} , v) : D_{u} \to D_{u \circ v}.
\end{align*}
Let $d : \Mor(\C) \to \prod_{\varphi \in \Mor(\C)} D_{\varphi}$ be a map such that $d(f) \in D_{f}$ for each $f \in \Mor(\C)$.
We call $d$ a \emph{derivation} from $\C$ to $D$ if $d(f \circ g) = f_{\ast}(dg) + g^{\ast}(df)$ for each pair $(f,g)$ of composable morphisms.
We define $\Der(\C,D)$ to be the $k$-vector space of derivations from $\C$ to $D$.
We call $d$ an \emph{inner derivation} from $\C$ to $D$ if there exists an element $(n_x)_{x \in \Ob(\C)} \in \prod_{x \in \Ob(\C)}D_{\id_x}$ such that 
$d(f) = f_{\ast}(n_{s(f)}) - f^{\ast}(n_{t(f)})$ for each $f \in \Mor(\C)$.
We define $\Ider(\C,D)$ to be the $k$-vector space of inner derivations from $\C$ to $D$.
%
The first Baues--Wirsching cohomology $\HBW^1(\C,D)$ is the quotient space $\Der(\C,D)/\Ider(\C,D)$.

\begin{rem}
Let $Q$ be a quiver, $\C$ a small category freely generated by $Q$, $N$ a $k\C$-module, $t$ the target functor, and $\tilde{D}$ the natural system $\pi_{\C}(N) \circ t$.
For a pair $(u,v)$ of composable morphisms, 
$u_{\ast}$ (\emph{resp.} $v^{\ast}$) maps $m \in \tilde{D}_{v} = \id_{t(v)} \cdot N$ (\emph{resp.} $n \in \tilde{D}_{u} = \id_{t(u)} \cdot N$) to $ u\cdot m \in \tilde{D}_{u \circ v} = \id_{t(u)} \cdot N$ (\emph{resp.} $n \in \tilde{D}_{u \circ v} = \id_{t(u)} \cdot N$).
\end{rem}

\subsection{Definition of algorithms}
\label{subsec:algorithm}

In this section, we give algorithms to obtain generators of $\Ider(\C,D)$.

Let $Q$ be a finite quiver, and $P$ the set of vertices of $Q$.
For subsets $Q_1$, $Q_3$ of $Q$ and a subset $\hat{P}$ of $P$, we define the set $H(\hat{P}; Q, Q_1, Q_3)$ to be 
\begin{align*}
\Set{h \in Q_3 | \begin{array}{l} \text{$t(h) \in \hat{P}$.}\\ \text{$hp$ is not a cycle in $Q$ for any path $p$ in $Q_1$.} \end{array}}.
\end{align*}
For subsets $Q_1$, $Q_2$ of $Q$ and $h \in H(\hat{P}; Q, Q_1, Q_3)$, we define the set $G(Q_1, Q_2; h)$ to be 
\begin{align*}
\Set{g \in Q_2 | \begin{array}{l} \text{There exists a cycle in $Q_1 \cup Q_2 \cup \Set{h}$}\\ \text{which  contains $g$ and $h$.} \end{array}}.
\end{align*}

\begin{algorithm}
\label{algorithm:A}
\makebox{}
\begin{description}
 \item[Input] a finite quiver $Q$.
 \item[Output] $((a_i)_{i=1}^{l}; (b_i)_{i=1}^{m}; (f_1)_{i=1}^{l}; (g_i)_{i=1}^{n}; (h_i)_{i=1}^{r})$.
 \item[Procedure]\makebox{}
 \begin{enumerate}
  \item Let $P$ be the set of vertices of $Q$.
  \item Let $\check{P}=\emptyset$, $\hat{P}=P$, $Q_1=\emptyset$, $Q_2=\emptyset$, $Q_3=Q$.
  \item \label{procedure:h} While $H(\hat{P}; Q, Q_1, Q_3) \neq \emptyset$, do the following:
  \begin{enumerate}
  \item Choose an element $h \in H(\hat{P}; Q, Q_1, Q_3)$.
  \item Let $Q' = ((Q_1 \cup Q_2) \setminus G(Q_1, Q_2; h)) \cup \Set{h}$.
  \item Let $\bar{Q}$ be a maximal acyclic subquiver of $Q$ including $Q'$.
  \item Let $\check{P} = \Set{a \in P | \text{$\exists f \in \bar{Q}$ such that $t(f)=a$.}}$.
  \item Let $\hat{P} = P \setminus \check{P}$.
  \item For each $a \in \check{P}$, choose $f_a \in \bar{Q}$ so that $t(f_a) = a$.
  \item Let $Q_1 = \Set{f_a | a \in \check{P}}$, $Q_2 = Q' \setminus Q_1$, and $Q_3 = Q \setminus Q'$.
  \end{enumerate}
  \item \label{procedure:poset} Let $l = |\check{P}|$.
For $i = 1, \ldots , l$, do the following:
  \begin{enumerate}
   \item Choose a vertex $x \in \check{P}$ such that there exists no arrow in $Q_1$ whose source is $x$.
   \item Let $a_i = x$.
   \item For $\alpha \in Q_1$ so that $t(\alpha) = x$, let $f_{i} = \alpha$.
   \item Let $\check{P} = \check{P} \setminus \Set{x}$, and $Q_1 = Q_1 \setminus \Set{\alpha}$.
  \end{enumerate}
  \item Let $\Set{b_1, \ldots , b_m} = \hat{P}$.
  \item Let $\Set{g_1, \ldots , g_n} = Q_2$.
  \item Let $\Set{h_1, \ldots , h_r} = Q_3$.
\end{enumerate}
\end{description}
\end{algorithm}

\begin{rem}
In Step \ref{procedure:h} in Algorithm \ref{algorithm:A}, $|H(\hat{P}; Q, Q_1, Q_3)|$ strictly decreases since $|\hat{P}|$ decreases in each step.
Hence Step \ref{procedure:h} is a finite procedure.
\end{rem}

\begin{rem}
Let $((a_i)_{i=1}^{l}; (b_i)_{i=1}^{m}; (f_1)_{i=1}^{l}; (g_i)_{i=1}^{n}; (h_i)_{i=1}^{r})$ be an output of Algorithm \ref{algorithm:A}.
Let 
\begin{align*}
\check{P} &= \Set{a_1, \ldots , a_l},\\ 
\hat{P} &= \Set{b_1, \ldots , b_m},\\ 
Q_1 &= \Set{f_1, \ldots , f_l},\\ 
Q_2 &= \Set{g_1, \ldots , g_n}, \text{and}\\ 
Q_3 &= \Set{h_1, \ldots , h_r}.
\end{align*}
The set $\check{P} \coprod \hat{P}$ is decomposition of $P$.
The set $Q_1 \coprod Q_2 \coprod Q_3$ is also decomposition of $Q$.
By Step \ref{procedure:poset} in Algorithm \ref{algorithm:A}, $a_i$ corresponds to the target of $f_i$ for $i = 1, \ldots , l$.
Hence if there exists a path from $a_j$ to $a_i$ or a path from $b_j$ to $a_i$ in $Q_1$, then the path is unique.
Since the quiver $Q_1 \cup Q_2$ is a maximal acyclic subquiver of $Q$, we can regard $\check{P}$ as a poset.
Moreover, if $a_j \le a_i$ in the poset $\check{P}$, then the inequality $i \le j$ holds.
If $Q$ is a finite acyclic quiver, then $Q_3$ is the empty set.
By Step \ref{procedure:h} in Algorithm \ref{algorithm:A}, for $h_i$ so that $t(h_i) \in \hat{P}$, there exists a path $p$ in $Q_1$ such that $h_i p$ is a cycle in $Q$.
\end{rem}

\begin{algorithm}
\label{algorithm:B}
\makebox{}
\begin{description}
 \item[Input] $((a_i)_{i=1}^{l}; (b_i)_{i=1}^{m}; (f_1)_{i=1}^{l}; (g_i)_{i=1}^{n}; (h_i)_{i=1}^{r})$.
 \item[Output] $(V, W)$.
 \item[Procedure]\makebox{}
 \begin{enumerate}
  \item Let $Q_1 = \Set{f_1, \ldots , f_l}$.
  \item (We define elements $v_{i,j}$ in the path algebra $kQ$.) For $j=1, \ldots , l$, do the following:
  \begin{enumerate}
   \item For $i=1, \ldots , l$, let $v_{i,j} = 0$.
   \item Let $v_{j,j} = \id_{a_j}$
   \item For $i=1, \ldots , n$, do the following:
  \begin{enumerate}
   \item Let $v_{l+i,j} = 0$.
   \item If there exists a path $p$ from $a_j$ to $t(g_i)$ in $Q_1$, then let $v_{l+i,j} = v_{l+i,j} + p$.
   \item If there exists a path $p$ from $a_j$ to $s(g_i)$ in $Q_1$, then let $v_{l+i,j} = v_{l+i,j} - g_{i}p$.
  \end{enumerate}
   \item For $i=1, \ldots , r$, do the following:
  \begin{enumerate}
   \item Let $v_{l+n+i,j} = 0$.
   \item If there exists a path $p$ from $a_j$ to $t(h_i)$ in $Q_1$, then let $v_{l+n+i,j} = v_{l+n+i,j} + p$.
   \item If there exists a path $p$ from $a_j$ to $s(h_i)$ in $Q_1$, then let $v_{l+n+i,j} = v_{l+n+i,j} - h_{i}p$.
  \end{enumerate}
  \end{enumerate}
  \item Let $V = (v_{i,j})_{1\le i \le l+n+r,\ 1\le j \le l}$.
  \item (We define elements $w_{i,j}$ in the path algebra $kQ$.) For $j=1, \ldots , m$, do the following:
  \begin{enumerate}
   \item For $i=1, \ldots , l$, let $w_{i,j} = 0$.
   \item For $i=1, \ldots , n$, do the following:
  \begin{enumerate}
   \item Let $w_{l+i,j} = 0$.
   \item If there exists a path $p$ from $b_j$ to $t(g_i)$ in $Q_1$, then let $w_{l+i,j} = w_{l+i,j} + p$.
   \item If there exists a path $p$ from $b_j$ to $s(g_i)$ in $Q_1$, then let $w_{l+i,j} = w_{l+i,j} - g_{i}p$.
  \end{enumerate}
   \item For $i=1, \ldots , r$, do the following:
  \begin{enumerate}
   \item Let $w_{l+n+i,j} = 0$.
   \item If there exists a path $p$ from $b_j$ to $t(h_i)$ in $Q_1$, then let $w_{l+n+i,j} = w_{l+n+i,j} + p$.
   \item If there exists a path $p$ from $b_j$ to $s(h_i)$ in $Q_1$, then let $w_{l+n+i,j} = w_{l+n+i,j} - h_{i}p$.
  \end{enumerate}
  \end{enumerate}
  \item Let $W = (w_{i,j})_{1\le i \le l+n+r,\ 1\le j \le m}$.
\end{enumerate}
\end{description}
\end{algorithm}

\begin{rem}
Let $(V,W)$ be the output of Algorithm \ref{algorithm:B} for some input.
The matrix $(v_{i,j})_{1\le i \le l,\ 1\le j \le l}$ is the identity matrix, i.e., the diagonal matrix whose entries one $(\id_{a_1}, \ldots , \id_{a_l})$.
The matrix $(w_{i,j})_{1\le i \le l,\ 1\le j \le m}$ is the zero matrix.
\end{rem}


\section{Our main result}
\label{sec:Main results}

We show our main result in this section.
Our main result computes the first Baues--Wirsching cohomology via the column echelon matrix obtained by our algorithm.


Let $Q$ be a finite quiver, $\C$ a small category freely generated by $Q$.
Fix a left $k\C$-module $N$, and 
consider the natural system $\tilde{D} = \pi_{\C}(N) \circ t$.

Let $T = ((a_i)_{i=1}^{l}; (b_i)_{i=1}^{m}; (f_1)_{i=1}^{l}; (g_i)_{i=1}^{n}; (h_i)_{i=1}^{r})$ be the output of Algorithm \ref{algorithm:A} for $Q$.
We define the $k$-vector space $A_1$, $A_2$, and $A_3$ by 
\begin{align*}
A_1 & = \bigoplus_{i=1}^{l}\tilde{D}_{f_i},&
A_2 & = \bigoplus_{i=1}^{n}\tilde{D}_{g_i},\text{ and}&
A_3 & = \bigoplus_{i=1}^{r}\tilde{D}_{h_i}.
\end{align*}
Let $(V, W)$ be the output of Algorithm \ref{algorithm:B} for $T$. 
Let $v_j$ and $w_j$ be the $j$-th column vector of $V$ and $W$, respectively.
The vectors $v_j$ and $w_j$ are elements of $\bigoplus_{i=1}^{l+n+r}k\C$.
We define the $k$-vector spaces $\bar{V}$ and $\bar{W}$ by 
\begin{align*}
\bar{V} & = \Braket{ v_{j}n_{a_j} | n_{a_j} \in \id_{a_j}\cdot N, 1 \le j \le l },\\
\bar{W} & = \Braket{ w_{j}n_{b_j} | n_{b_j} \in \id_{b_j}\cdot N, 1 \le j \le m }.
\end{align*}

\begin{thm}
\label{main_thm}
The first Baues--Wirsching cohomology $\HBW^1(\C,\tilde{D})$ is isomorphic to 
\begin{align*}
(A_1 \oplus A_2 \oplus A_3)/(\bar{V}+\bar{W})
\end{align*}
as $k$-vector spaces.
\end{thm}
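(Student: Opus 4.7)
\emph{Proof plan.}
Because $\C$ is freely generated by $Q$, restricting to arrows identifies $\Der(\C,\tilde D)$ with $\bigoplus_{f\in Q}\tilde D_f=A_1\oplus A_2\oplus A_3$: any choice $(d(f))_{f\in Q}$ extends uniquely to a derivation on all paths by the Leibniz rule, and consistency is automatic because $\tilde D=\pi_{\C}(N)\circ t$ satisfies $u_\ast=u\cdot(-)$ and $v^\ast=\id$. Under this isomorphism, the subspace $\Ider(\C,\tilde D)$ corresponds to the image of the $k$-linear map $\phi:\bigoplus_{x\in P}\id_x\cdot N\to A_1\oplus A_2\oplus A_3$ given by $(n_x)_x\mapsto(f\cdot n_{s(f)}-n_{t(f)})_{f\in Q}$. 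It thus suffices to prove $\Img\phi=\bar V+\bar W$.

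Next I would record the forest structure of $Q_1$ noted after Algorithm~\ref{algorithm:A}: each $a_i\in\check P$ has a unique incoming $Q_1$-arrow $f_i$, and no $b\in\hat P$ is the target of any $Q_1$-arrow, so $Q_1$ is a directed forest on $P$ whose roots lie in $\hat P$. For each $x\in P$ and each descendant $z$ of $x$ in this forest, let $p_{x\to z}$ be the unique $Q_1$-path from $x$ to $z$, with $p_{x\to x}=\id_x$. For $m_x\in\id_x\cdot N$, define a ``propagated tuple'' $\Phi_x(m_x)\in\bigoplus_{z\in P}\id_z\cdot N$ by $(\Phi_x(m_x))_z=p_{x\to z}\cdot m_x$ if $z$ is a descendant of $x$ and $0$ otherwise. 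Summing over $x$ yields a $k$-linear endomorphism $\Phi$ of $\bigoplus_{x\in P}\id_x\cdot N$ which, ordering $P$ so that ancestors precede descendants, is triangular with identity on the diagonal, hence invertible. Consequently $\Img\phi=\Img(\phi\circ\Phi)$.

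The crux is the identification $v_j\cdot n_{a_j}=-\phi(\Phi_{a_j}(n_{a_j}))$ for $j\in\{1,\dots,l\}$, and analogously $w_j\cdot n_{b_j}=-\phi(\Phi_{b_j}(n_{b_j}))$, which I would check arrow by arrow. For the $f_i$-component: if $a_i$ is a strict descendant of $a_j$ then $s(f_i)$ is also a descendant, and the forest identity $f_i\circ p_{a_j\to s(f_i)}=p_{a_j\to a_i}$ forces the entry to vanish; if $a_i$ is not a descendant of $a_j$ then neither is $s(f_i)$ and the entry vanishes trivially; in the boundary case $i=j$ we get exactly $\id_{a_j}\cdot n_{a_j}$, matching $v_{j,j}=\id_{a_j}$. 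For a $Q_2\cup Q_3$-arrow $g$, the four subcases distinguished by whether $s(g)$ and/or $t(g)$ is a descendant of $a_j$ match the four possible entry values prescribed in Algorithm~\ref{algorithm:B} (respectively $p_t-gp_s$, $p_t$, $-gp_s$, or $0$), giving $-d(g)$ in each case. Summing then yields $\bar V+\bar W=-\phi(\Phi(\bigoplus_{x\in P}\id_x\cdot N))=\Img\phi=\Ider(\C,\tilde D)$, and quotienting $\Der(\C,\tilde D)\cong A_1\oplus A_2\oplus A_3$ by this subspace gives the theorem.

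The main obstacle is the arrow-by-arrow verification of the third paragraph: although each case is a one-line identity, one must confirm that the conditions ``there exists a $Q_1$-path from $a_j$ to $s(g_k)$'' written into Algorithm~\ref{algorithm:B} genuinely correspond to the ancestor relation in the $Q_1$-forest, which is precisely what the uniqueness of $Q_1$-paths granted by the forest structure provides. Once that dictionary is set up, all signs and support conditions align and the equality $\bar V+\bar W=\Ider(\C,\tilde D)$ inside $A_1\oplus A_2\oplus A_3$ drops out.
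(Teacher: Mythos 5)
Your proposal is correct and takes essentially the same route as the paper: identify $\Der(\C,\tilde{D})$ with $A_1\oplus A_2\oplus A_3$ and $\Ider(\C,\tilde{D})$ with the image of the map $(n_x)_x\mapsto(\alpha n_{s(\alpha)}-n_{t(\alpha)})_{\alpha}$, then perform a unipotent (Gaussian-elimination) change of variables along the $Q_1$-forest to land on the columns $v_j$, $w_j$. Your operator $\Phi$ is precisely the closed form of the paper's recursively defined $\overline{r_{a_j}}$ and $\overline{r_{b_j}}$, and your arrow-by-arrow verification just spells out the ``direct calculation'' the paper leaves implicit.
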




\begin{proof}
According to Baues and Wirsching \cite{MR814176}, 
if $\C$ is freely generated by $S \subset \Mor(\C)$, then 
we can identify $\Der(\C,D)$ with $\prod_{\alpha \in S}D_{\alpha}$.
Via the identification, $\Ider(\C,D)$ is 
the $k$-vector space
\begin{align*}
\left\{(\alpha_{\ast}(n_{s(\alpha)}) - \alpha^{\ast}(n_{t(\alpha)}))_{\alpha} \in \prod_{\alpha \in S}D_{\alpha} \Bigg| (n_x)_{x} \in \prod_{x \in \Ob(\C)}D_{\id_x} \right\}.
\end{align*}
Let 
\begin{align*}
Q &=\Set{f_i | 1 \le i \le l} \cup \Set{g_i | 1 \le i \le n} \cup \Set{h_i | 1 \le i \le r}, \text{ and} \\
P &=\Set{a_i | 1 \le i \le l} \cup \Set{b_i | 1 \le i \le m}.
\end{align*}
It follows that $\Der(\C,\tilde{D}) \cong A_1 \oplus A_2 \oplus A_3$.
Hence $\Ider(\C,\tilde{D})$ is isomorphic to the $k$-vector space
\begin{align*}
B = \Set{ (\alpha n_{s(\alpha)} - n_{t(\alpha)})_{\alpha \in Q} \in A | (n_x)_{x} \in \bigoplus_{x \in P}\id_{x}\cdot N }.
\end{align*}
For $x \in P$ and $m \in \id_{x}\cdot N$, we define $r_{x}m = (r_{x,\alpha}m)_{\alpha \in Q} \in A$ by 
\begin{align*}
r_{x,\alpha}m & = 
\begin{cases}
-\alpha m & (s(\alpha) = x)\\
m & (t(\alpha) = x)\\
0 & (\text{otherwise}).
\end{cases}
\end{align*}
It is clear that the $k$-vector space $B$ is equal to 
\begin{align*}
\Braket{r_{x}m | x \in P, m \in \id_{x}\cdot N }.
\end{align*}

For $j = 1, \ldots , l$ and $n_{a_j} \in \id_{a_j}\cdot N$, we define $\overline{r_{a_j}}n_{a_j}$ to be $r_{a_j}n_{a_j} + \sum_{k=1}^{i-1}\overline{r_{a_k}}f_{k}n_{a_j}$.
For $j = 1, \ldots , m$ and $n_{b_j} \in \id_{b_j}\cdot N$, we define $\overline{r_{b_j}}n_{b_j}$ to be $r_{b_j}n_{b_j} + \sum_{k=1}^{l}\overline{r_{a_k}}f_{k}n_{b_j}$.
It follows from the direct calculation that 
$\overline{r_{a_j}}n_{a_j}$ and $\overline{r_{b_j}}n_{b_j}$ are equal to $v_{j}n_{a_j}$ and $w_{j}n_{b_j}$, respectively.
Hence we have Theorem \ref{main_thm}.
\end{proof}

\section{Some examples}
\label{sec:example}

In this section, we apply our algorithm to some examples of finite quivers to calculate the first Baues--Wirsching cohomology.
First we apply our algorithm to some quivers whose set of vertices is a $B_2$-free poset, which is discussed in \cite{Momose--Numata}.

\begin{exmp}
\label{chain}
Let $P_{n} = \Set{1, \ldots , n}$.
Define $\alpha_{i}$ to be an arrow from $i+1$ to $i$.
Let $Q_{n} = \Set{\alpha_i | i = 1, \ldots , n-1}$.
The quiver $Q_n$ is a chain in Figure \ref{fig:1}.
\begin{figure}
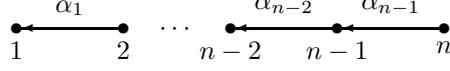

\centering
\figI
\caption{The quiver in Example \ref{chain}.}
\label{fig:1}
\end{figure}
An output of Algorithm \ref{algorithm:A} for $Q_n$ is 
\begin{align*}
a_i &= i \text{ for } i = 1, \ldots , n-1,\\ 
b_1 &= n, \text{ and}\\ 
f_i &= \alpha_i \text{ for } i = 1, \ldots , n-1.
\end{align*}
Consider the small category $\C_{n}$ generated by $Q_{n}$.
An output of Algorithm \ref{algorithm:B} is 
\begin{align*}
v_{j} &= \bigoplus_{k=1}^{n-1} \delta_{j,k}\id_{a_k} \text{ for } j = 1, \ldots , n-1, \text{ and}\\ 
w_{1} &= 0^{\oplus (n-1)}.
\end{align*}
For a $k\C_{n}$-module $N$, 
\begin{align*}
A_1 &= \bigoplus_{k=1}^{n-1}\id_{a_{k}}\cdot N,\\ 
A_2 &= 0,\\ 
A_3 &= 0,\\ 
\bar{V}\ &= \bigoplus_{k=1}^{n-1}\id_{a_{k}}\cdot N, \text{ and}\\ 
\bar{W} &= 0.
\end{align*}
By Theorem \ref{main_thm}, we have 
\begin{align*}
\HBW^1(\C_{n},\pi_{\C_{n}}(N) \circ t) = 0.
\end{align*}
\end{exmp}

\begin{exmp}
\label{->a<-}
Let $P_{n} = \Set{1, \ldots , n} \cup \Set{x=0}$.
Define $\alpha_{i}$ to be an arrow from $i$ to $x$.
Let $Q_{n} = \Set{ \alpha_i | i = 1, \ldots , n}$.
The quiver $Q_n$ is a quiver such that the targets of each arrow is $x$.
See Figure \ref{fig:3}.
\begin{figure}
\centering
\figIII
\caption{The quiver in Example \ref{->a<-}.}
\label{fig:3}
\end{figure}
An output of Algorithm \ref{algorithm:A} for $Q_n$ is 
\begin{align*}
a_1 &= x,\\ 
b_i &= i \text{ for } i = 1, \ldots , n,\\ 
f_1 &= \alpha_n, \text{ and}\\ 
g_i &= \alpha_i \text{ for } i = 1, \ldots , n-1.
\end{align*}
Consider the small category $\C_{n}$ generated by $Q_{n}$.
An output of Algorithm \ref{algorithm:B} is 
\begin{align*}
v_{1} &= (\id_{a_1})^{\oplus n},\\ 
w_{j} &= 0 \oplus \left(\bigoplus_{k=1}^{n-1} (-\delta_{j,k}g_{k})\right) \text{ for } j = 1, \ldots , n, \text{ and}\\
w_{n} &= 0 \oplus \left(f_{1}^{\oplus (n-1)}\right).
\end{align*}
For a $k\C_{n}$-module $N$, 
\begin{align*}
A_1 &= \id_{a_{1}}\cdot N,\\ 
A_2 &= \bigoplus_{k=1}^{n-1}\id_{a_{1}}\cdot N,\\ 
A_3 &= 0,\\ 
\bar{V}\ &= \Braket{m^{\oplus n} | m \in \id_{a_1}\cdot N}, \text{ and}\\ 
\bar{W} &= \left(\bigoplus_{k=1}^{n-1}g_{k}\cdot N \right) + \Braket{f_{1}^{\oplus (n-1)}n_{b_3} | n_{b_3} \in \id_{b_3}\cdot N}.
\end{align*}
By Theorem \ref{main_thm}, we have 
\begin{align*}
&\HBW^1(\C,\pi_{\C_{n}}(N) \circ t) \cong (A_1 \oplus A_2)/(\bar{V}+\bar{W}).
%
\end{align*}
Moreover, if $N = k\C_{n}$, then 
\begin{align*}
&\HBW^1(\C_{n},\pi_{\C_{n}}(k\C_{n}) \circ t)\\ \cong 
&\left(\bigoplus_{k=1}^{n-1}(\Braket{ \id_{a_1}, f_{1}}+\Braket{ g_{j} | j \neq k })\right) \bigm/ \Braket{f_{1}^{\oplus(n-1)} }.
\end{align*}
\end{exmp}

\begin{exmp}
\label{zigzag_loop}
Let $P_{n} = \Set{x_{j} = (0,j) | j \in \ZZ/n\ZZ} \cup \Set{y_{j} = (1,j) | j \in \ZZ/n\ZZ}$.
Define $\alpha_j$ and $\beta_j$ to be arrows from $y_j$ and $y_{j-1}$ to $x_j$, respectively.
Let $Q_{n}=\Set{ \alpha_j | j \in \ZZ/n\ZZ} \cup \Set{ \beta_j | j \in \ZZ/n\ZZ}$. 
The quiver $Q_n$ is a zigzag circle in Figure \ref{fig:4}.
\begin{figure}
\centering
\figIV
\caption{The quiver in Example \ref{zigzag_loop}.}
\label{fig:4}
\end{figure}
An output of Algorithm \ref{algorithm:A} for $Q_n$ is 
\begin{align*}
a_j &= x_j \text{ for } j = 1, \ldots , n,\\ 
b_j &= y_j \text{ for } j = 1, \ldots , n,\\ 
f_j &= \alpha_j \text{ for } j = 1, \ldots , n, \text{ and}\\ 
g_j &= \beta_j \text{ for } j = 1, \ldots , n.
\end{align*}
Consider the small category $\C_{n}$ generated by $Q_{n}$. 
An output of Algorithm \ref{algorithm:B} is 
\begin{align*}
v_{j} &= (\bigoplus_{k=1}^{n} \delta_{j,k}\id_{a_k}) \oplus (\bigoplus_{k=1}^{n} \delta_{j,k}\id_{a_k})
 \text{ for } j = 1, \ldots , n, \text{ and}\\ 
w_{j} &= (0^{\oplus n}) \oplus (\bigoplus_{k=1}^{n} (\delta_{j,k}f_{k}-\delta_{j+1,k}g_{k})) \text{ for } j = 1, \ldots , n.
\end{align*}
For a $k\C_{n}$-module $N$, 
\begin{align*}
A_1 &= \bigoplus_{k=1}^{n}\id_{a_{k}}\cdot N
,\\ 
A_2 &= \bigoplus_{k=1}^{n}\id_{a_{k}}\cdot N
,\\ 
A_3 &= 0,\\ 
\bar{V}\ & = \Braket{ v_{j}n_{a_j} | n_{a_j} \in \id_{a_j}\cdot N, j = 1, \ldots , n }, \text{ and}\\ 
\bar{W} & = \Braket{ w_{j}n_{b_j} | n_{b_j} \in \id_{b_j}\cdot N, j = 1, \ldots , n }.
\end{align*}
By Theorem \ref{main_thm}, we have 
\begin{align*}
&\HBW^1(\C_{n},\pi_{\C_{n}}(N) \circ t) 
\cong 
(A_1 \oplus A_2)/(\bar{V}+\bar{W}).
\end{align*}
Moreover, if $N = k\C_{n}$, then 
\begin{align*}
\HBW^1(\C_{n},\pi_{\C_{n}}(k\C_{n}) \circ t) \cong 
\bigoplus_{k=0}^{n-1}\Braket{ \id_{a_k}, f_{k} }.
\end{align*}
\end{exmp}

Next we consider examples which are not posets.

\begin{exmp}
\label{circle_1}
Let $P_{n} = {\ZZ/n\ZZ}$.
Define $\alpha_j$ to be an arrow from $j+1$ to $j$.
Let $Q_{n}=\Set{\alpha_j | j \in \ZZ/n\ZZ}$.
The quiver $Q_n$ is a circle in Figure \ref{fig:6}.
\begin{figure}
\centering
\figVI
\caption{The quiver in Example \ref{circle_1}.}
\label{fig:6}
\end{figure}
An output of Algorithm \ref{algorithm:A} for $Q_n$ is 
\begin{align*}
a_j &= j \text{ for } j = 1, \ldots , n-1,\\ 
b_1 &= n,\\ 
f_j &= \alpha_j \text{ for } j = 1, \ldots , n-1, \text{ and}\\ 
h_1 &= \alpha_n.
\end{align*}
Consider the small category $\C_{n}$ generated by $Q_{n}$.
An output of Algorithm \ref{algorithm:B} is 
\begin{align*}
v_{j} &= \left(\bigoplus_{k=1}^{n-1} \delta_{j,k}\id_{a_k}\right)\oplus (-h_{1}f_{1}\cdots f_{j-1}) \text{ for } j = 1, \ldots , n,\\ 
w_{1} &= 0^{\oplus (n-1)}\oplus (\id_{b_{1}} - h_{1}f_{1}\cdots f_{n-1}).
\end{align*}
For a $k\C_{n}$-module $N$, 
\begin{align*}
A_1 &= \bigoplus_{k=1}^{n-1}\id_{{a_k}}\cdot N ,\\ 
A_2 &= 0,\\ 
A_3 &= \id_{{b_1}}\cdot N ,\\ 
\bar{V}\ & = \Braket{ v_{j}n_{a_j} | n_{a_j} \in \id_{a_j}\cdot N, j = 1, \ldots , n-1 }, \text{ and}\\ 
\bar{W} & = \Braket{ w_{1}n_{b_{1}} | n_{b_{1}} \in \id_{b_{1}}\cdot N }.
\end{align*}
By Theorem \ref{main_thm}, we have 
\begin{align*}
&\HBW^1(\C_{n},\pi_{\C_{n}}(N) \circ t) 
\cong 
(A_1 \oplus A_3)/(\bar{V}+\bar{W}). 
\end{align*}
Moreover, if $N = k\C_{n}$, then 
\begin{align*}
&
\HBW^1(\C_{n},\pi_{\C_{n}}(k\C_{n}) \circ t)\\ 
\cong
&
\Braket{ \id_{a_0} } + \Braket{ h_{1}f_{1}\cdots f_{j-1} | j = 1, \ldots , n-1 }.
\end{align*}
\end{exmp}

\begin{exmp}
\label{circle_2}
Let $P_{n} = {\ZZ/n\ZZ}$.
Define $\alpha_j$ to be an arrow from $j+1$ to $j$, and $\beta_j$ to be an arrow from $j$ to $j+1$.
Let $Q_{n}=\Set{\alpha_j | j \in \ZZ/n\ZZ} \cup \Set{\beta_j | j \in \ZZ/n\ZZ}$.
The quiver $Q_n$ is a circle in Figure \ref{fig:7}.
\begin{figure}
\centering
\figVII
\caption{The quiver in Example \ref{circle_2}.}
\label{fig:7}
\end{figure}
An output of Algorithm \ref{algorithm:A} for $Q_n$ is 
\begin{align*}
a_j &= j \text{ for } j = 1, \ldots , n-1,\\ 
b_1 &= n,\\ 
f_j &= \alpha_j \text{ for } j = 1, \ldots , n-1,\\ 
g_1 &= \beta_n,\\ 
h_j &= \beta_j \text{ for } j = 1, \ldots , n-1, \text{ and}\\ 
h_n &= \alpha_n.
\end{align*}
Consider the small category $\C_{n}$ generated by $Q_{n}$. 
We define $p_{i,j}$ in $k\C$ by 
\begin{align*}
p_{i,j} & = 
\begin{cases}
f_{i}\cdots f_{j} & (\text{if $i<j+1$})\\
\id_{a_{i}} & (\text{if $i=j+1$})\\
0 & (\text{if $i>j+1$})
\end{cases}.
\end{align*}
An output of Algorithm \ref{algorithm:B} is 
\begin{align*}
v_{i,j} &= \delta_{i,j}\id_{a_j}\\
& \text{ for } i = 1, \ldots , n-1,\ j = 1, \ldots , n-1,\\ 
v_{n,j} &= p_{1,j-1} \\
& \text{ for } j = 1, \ldots , n-1,\\ 
v_{n+i,j} &= p_{i+1,j-1}-h_{i}p_{i,j-1}\\
& \text{ for } i = 1, \ldots , n-1,\ j = 1, \ldots , n-1,\\ 
v_{2n,j} &= -h_{n}p_{1,j-1}\\ 
& \text{ for } j = 1, \ldots , n-1,\\ 
w_{i,1} &= 0\\
& \text{ for } i = 1, \ldots , n-1,\\ 
w_{n,1} &= p_{1,n-1} - g_{1} ,\\ 
w_{n+i,1} &= p_{i+1,n-1}-h_{i}p_{i,n-1}\\
& \text{ for } i = 1, \ldots , n-1, \text{ and}\\ 
w_{2n,1} &= \id_{b_1}-h_{n}p_{1,n-1}.
\end{align*}
Let $v_{j} = \bigoplus_{i=1}^{2n}v_{i,j}$ for $j = 1, \ldots , n-1$, and $w_{1} = \bigoplus_{i=1}^{2n}w_{i,1}$.
For a $k\C_{n}$-module $N$, 
\begin{align*}
A_1 &= \bigoplus_{k=1}^{n-1}\id_{a_k}\cdot N ,\\ 
A_2 &= \id_{a_1}\cdot N ,\\ 
A_3 &= (\bigoplus_{k=2}^{n-1}\id_{a_k}\cdot N) \oplus (\id_{b_1}\cdot N) \oplus (\id_{b_1}\cdot N) ,\\ 
\bar{V}\ & = \Braket{ v_{j}n_{a_j} | n_{a_j} \in \id_{a_j}\cdot N, j = 1, \ldots , n-1 }, \text{ and}\\ 
\bar{W} & = \Braket{ w_{1}n_{b_1} | n_{b_1} \in \id_{b_1}\cdot N }.
%
%
%
\end{align*}
By Theorem \ref{main_thm}, we have 
\begin{align*}
&\HBW^1(\C_{n},\pi_{\C_{n}}(N) \circ t) 
\cong 
(A_1 \oplus A_2 \oplus A_3)/(\bar{V}+\bar{W}).
\end{align*}
\end{exmp}



%

%
%

\end{document}